\newtheorem{thm}{Theorem}[section]
\newtheorem{theorem}[thm]{Theorem}
\newtheorem{lemma}[thm]{Lemma}
\newtheorem{proposition}[thm]{Proposition}
\newtheorem{remark}[thm]{Remark}
\newcommand{\mb}[1]{\mbox{\boldmath $#1$}}
\begin{document}
\title{A First Order Algorithm on an Optimization Problem with Improved Convergence when Problem is Convex }

\author{{\bf{Chee-Khian Sim}}\footnote{Email address:
chee-khian.sim@port.ac.uk}
 \\ School of Mathematics and Physics\\
University of Portsmouth \\ Lion Gate Building, Lion Terrace \\ Portsmouth PO1 3HF
}

\date{14 August 2025}
\maketitle

\begin{abstract}
We propose a first order algorithm, a modified version of FISTA, to solve an optimization problem with an objective function that is the sum of a possibly nonconvex function, with Lipschitz continuous gradient, and a convex function which can be nonsmooth.  The algorithm is shown to have an iteration complexity of $\mathcal{O}(\epsilon^{-2})$ to find an $\epsilon$-approximate solution to the problem, and this complexity improves to $\mathcal{O}(\epsilon^{-2/3})$ when the objective function turns out to be convex.   We further provide asymptotic convergence rate  for the algorithm of worst case $o(\epsilon^{-2})$ iterations to find an $\epsilon$-approximate solution to the problem, with worst case $o(\epsilon^{-2/3})$ iterations  when its objective function is convex.

\vspace{10pt}

\noindent {\bf{Keywords.}} 
Fast iterative shrinkage thresholding algorithm (FISTA); nonconvex optimization problem; convex optimization problem; iteration complexity; asymptotic convergence rate.
\end{abstract}

\section{Introduction}\label{sec:Intro}

First order methods are frequently used to solve optimization problems, especially when these problems are large scale \cite{Beck2}.  Fast iterative shrinkage thresholding algorithm (FISTA), a first order method, is proposed in \cite{Beck} to solve a convex minimization problem whose objective function is the sum of two convex functions, where one function can be nonsmooth; see also \cite{Nesterov4, Nesterov6, Nesterov5}.   Algorithms, including first order algorithms,  to solve minimization problems whose objective functions are nonconvex are also studied in the literature (see for example \cite{Cartis}).  The objective function can be a sum of two functions, where one function is nonconvex, has Lipschitz continuous gradient, and the other function is convex, but nonsmooth.  Solving this minimization problem has application in areas such as training a neural network, as discussed for example in  \cite{Goodfellow}.   Existing works that study first order algorithms on nonconvex optimization problems include \cite{Carmon2,Ghadimi,Ghadimi2,Kong,Li,Li2,Liang,Liang2,Liang3,Yao2}.  

\vspace{10pt}

\noindent In the works mentioned in the previous paragraph, such as \cite{Ghadimi, Liang3}, the iteration complexity to find an $\epsilon$-approximate solution to the nonconvex optimization problem, whose objective function is the sum of two functions, where one function can be nonconvex, has Lipschitz continuous gradient, and the other function is convex, but nonsmooth, is shown to be $\mathcal{O}(\epsilon^{-2})$.  This is also the iteration complexity found for the first order algorithm, which is a modified version of FISTA, proposed in this paper on the optimization problem.   Our algorithm also has the added feature that when the objective function to the problem turns out to be convex, it is able to ``detect'' this and its iteration complexity improves to $\mathcal{O}(\epsilon^{-2/3})$.  In \cite{Ghadimi}, the iteration complexity of the algorithm there also improves to $\mathcal{O}(\epsilon^{-2/3})$ when the objective function is convex, but the parameter inputs to the algorithm need to be adjusted accordingly.  In \cite{Ghadimi2}, the authors improve on \cite{Ghadimi} by introducing a parameter free accelerated gradient method to solve nonconvex optimization problems, although the authors only show optimal complexity on function values when the objective function is convex.   In \cite{Liang3}, the accelerated composite gradient (ACG) variant introduced in the paper reduces to FISTA when the objective function is convex.   The algorithm in this paper however is different from that in \cite{Liang3} in the way nonconvexity is handled, with different analysis as well.  As with the algorithm in \cite{Liang3}, our algorithm performs exactly one resolvent evaluation of $h$ in each iteration, unlike  papers found in the literature such as \cite{Ghadimi}, where its ACG variant requires two resolvent evaluations of $h$ in each iteration.   We note that in \cite{Shi},  Nesterov's accelerated gradient method is shown to have the iteration complexity of $\mathcal{O}(\epsilon^{-2/3})$ when the objective function consists only of a single  convex function with Lipschitz continuous gradient.  The approach for analysis in the paper is via high-resolution differential equations.

\vspace{10pt}

\noindent In this paper, we also provide asymptotic convergence rate\footnote{The study of asymptotic convergence is not new.  For example, local superlinear convergence of interior point methods has been analyzed by researchers \cite{Kojima, Luo, Nesterov2, Potra1, Sim1, Wright}.} results for our algorithm, both when the objective function is nonconvex and convex.  Our asymptotic convergence rate for the convex case is also found in \cite{Chen} through high-resolution differential equations.  In our case, we analyze our algorithm directly to obtain the result.   In the recent published paper \cite{Gratton}, the authors show that it takes $o(\epsilon^{-2})$ iterations to find an $\epsilon$-approximate solution to a nonconvex optimization problem using steepest descent algorithm and other linesearch methods.  In this paper, we add further to this by showing asymptotic convergence rate of $o(n^{-1/2})$ for our algorithm, a first order algorithm of the accelerated type\footnote{Examples of first order algorithms of the accelerated type on nonconvex optimization problems include accelerated composite gradient variants \cite{Liang3} and accelerated proximal gradient methods as found in \cite{Li}.}, when we solve the nonconvex optimization problem we considered in the paper, under the assumption of convergence of iterates.   To the best of our knowledge, this is the first time asymptotic convergence rate result is presented on first order algorithms of the accelerated type when solving nonconvex optimization problems.


\vspace{10pt}

\noindent In the next section, Section \ref{sec:CNO}, we introduce the optimization problem we are solving in this paper, and also present the first order algorithm we are proposing to solve the problem.   In Section \ref{sec:technical}, we state and prove results that are needed for the iteration complexities and asymptotic convergence rate results in its next section and associated subsection.  We conclude the paper with Section \ref{sec:conclusion}.

\subsection{Notations and facts}\label{subsec:notationsfacts}

Given $\Omega$ a closed convex set of $\Re^n$, we denote the standard projection map from $\Re^n$ onto $\Omega$ by $P_\Omega(\cdot)$.  We have the following result on $P_{\Omega}$:
\begin{eqnarray}\label{ineq:projection}
\| P_\Omega(x) - y \| \leq \| x - y \|,
\end{eqnarray}
where $x \in \Re^n$ and $y \in \Omega$.

\vspace{10pt}

\noindent The following is a simple fact used in this paper:
\begin{eqnarray}\label{eq:triangle}
\| a - b \|^2 - 2 \langle a - b, a - c \rangle + \| a - c \|^2 = \| b - c \|^2,
\end{eqnarray} 
where $a, b, c \in \Re^n$.

\vspace{10pt}

\noindent Given functions $f: \mathcal{S} \rightarrow \mathbb{Z}_{++}$ and $g: \mathcal{S} \rightarrow \Re_{++}$, where $\mathcal{S}$ is an arbitrary set. For a subset $\hat{\mathcal{S}} \subseteq \mathcal{S}$, we write $f(w) = \mathcal{O}(g(w))$ for all $w \in \hat{\mathcal{S}}$ to mean that $f(w) \leq M g(w)$ for all $w \in \hat{\mathcal{S}}$, where $M > 0$ is a positive constant.  For $\hat{\mathcal{S}} = (0, w_0]$, where $w_0 > 0$, we write $f(w) = o(g(w))$ to mean that $f(w)/g(w) \rightarrow 0$, as $w \rightarrow 0$.

\section{An Optimization Problem and Modified FISTA}\label{sec:CNO} 

We consider the following optimization problem:
\begin{eqnarray}\label{CNO}
\min_{y \in \Re^n}\phi(y) := f(y) + h(y),
\end{eqnarray}
where $f$ can be nonconvex on $\Omega \subset \Re^n$, has Lipschitz continuous gradient with Lipschitz constant $L$, and $\Omega$ is a closed convex set in $\Re^n$.  Hence,  we have
\begin{eqnarray*}
\| \nabla f(u_1) - \nabla f(u_2) \| \leq L \|u_1 - u_2\|,\ \forall\ u_1, u_2 \in \Omega.
\end{eqnarray*}
As a consequence, the following holds: 
\begin{eqnarray}\label{ineq:lipschitzconsequence}
| f(u_1) - l_f(u_1; u_2) | \leq \frac{L}{2} \| u_1 - u_2 \|^2, \ \forall\ u_1, u_2 \in \Omega,
\end{eqnarray}
where
\begin{eqnarray*}
\ell_f(u_1;u_2) := f(u_2) + \langle \nabla f(u_2), u_1 - u_2 \rangle.
\end{eqnarray*}
In particular,
\begin{eqnarray}\label{ineq:upperlipschitzconsequence}
f(u_1) - l_f(u_1; u_2)  \leq \frac{L}{2} \| u_1 - u_2 \|^2, \ \forall\ u_1, u_2 \in \Omega.
\end{eqnarray}
Furthermore, $h$ is a proper lower semi-continuous convex function, which can be non-smooth, with ${\rm{dom}}\ h \subset \Re^n$ closed and bounded.  We assume that ${\rm{dom}}\ h \subseteq \Omega$, and $\| y \| \leq C$ for $y \in {\rm{dom}}\ h$, where $C$ is some positive constant.  An example of $h$ is the indicator function of a closed and bounded convex set.
%
%
%
\vspace{10pt}

\noindent Observe that from (\ref{ineq:lipschitzconsequence}), we have
\begin{eqnarray}\label{ineq:lowerlipschitzconsequence}
-\frac{L}{2} \| u_1 - u_2 \|^2 \leq f(u_1) - l_f(u_1; u_2), \ \forall\ u_1, u_2 \in \Omega.
\end{eqnarray}
Therefore, there exists $m \geq 0$ such that
\begin{eqnarray}\label{ineq:lowerlipschitzconsequence2}
-\frac{m}{2} \| u_1 - u_2 \|^2 \leq f(u_1) - l_f(u_1; u_2), \ \forall\ u_1, u_2 \in \Omega.
\end{eqnarray}
An example of such $m$ is $L$ as seen from (\ref{ineq:lowerlipschitzconsequence}).  Let $\underline{m} \geq 0$ be the smallest $m$ such that (\ref{ineq:lowerlipschitzconsequence2}) holds.  It is clear that $\underline{m} \leq L$.  Furthermore,  $\underline{m} = 0$ if and only if $f$ is convex on $\Omega$.

\vspace{10pt}

\noindent Note that there exists an optimal solution to Problem (\ref{CNO}), which we denote by $y^\ast$, since ${\rm{dom}}\ h$ is closed and bounded.   A necessary condition for $y \in {\rm{dom}} \ h$ to be an optimal solution of Problem (\ref{CNO}) is $0 \in \nabla f({y}) + \partial h(y)$.  Motivated by this condition, we have the notion of an $\epsilon$-approximate solution\footnote{This notion of closeness to optimality is employed in \cite{Kong,Kong2,Liang3} and other papers that can be found in the literature.} of Problem (\ref{CNO}), which is an $(y, v)$ which satisfies
\begin{eqnarray}\label{approximatesolution}
v \in \nabla f(y) + \partial h(y), \quad \| v \| \leq \epsilon,
\end{eqnarray}
where ${\epsilon} > 0$ is a given tolerance.  

\vspace{10pt}

\noindent We now propose a first order algorithm, which we called Algorithm mFISTA, to find an ${\epsilon}$-approximate solution to Problem (\ref{CNO}), for a given tolerence ${\epsilon} > 0$.   This algorithm is a modification of FISTA for convex optimization as found in \cite{Beck} (see also \cite{Nesterov4}) to the setting when $f$ in Problem (\ref{CNO}) can be nonconvex.


\noindent\rule[0.5ex]{1\columnwidth}{1pt}

{\centerline{\bf{{Algorithm mFISTA}}}}

\noindent\rule[0.5ex]{1\columnwidth}{1pt}

\noindent {\bf{Initialization}}: Let  tolerance $\epsilon > 0$, 
	initial point $x_{1} = y_{0}  \in {\rm{dom}} \ h$ and $a_{0} = 1$ be given.   Set $k = 1$ with $L_k = 0$.  
	
%

\vspace{10pt}

\noindent {\bf{Step $\mb{1}$}}.   Set
		\begin{eqnarray}\label{eq:funfk}
	 	 f_{k}(y) = f(y) + \frac{L_k}{2} \| y - y_{k-1} \|^2.
	 	 \end{eqnarray}
	 	 
\vspace{5pt}

\noindent {\bf{Step $\mb{2}$}}.   Compute
		\begin{eqnarray}\label{eq:y_k}
		y_{k} = {\mbox{argmin}}_{y \in \Re^n} \left\{ l_{f_{k}}(y; x_{k}) + h(y) + 2L \|y - {x}_{k} \|^2 \right\}.
		\end{eqnarray}

\vspace{5pt}

\noindent {\bf{Step $\mb{3}$}}.   Find $a_{k} > 0$ such that 
		\begin{eqnarray}\label{eq:ak}
		a_{k} (a_{k} - 1) = a_{k-1}^2.
		\end{eqnarray}

\vspace{5pt}

\noindent {\bf{Step $\mb{4}$}}. Compute 
		\begin{eqnarray}
		v_{k} & = & \nabla f(y_{k}) - \nabla f(x_{k}) + L_k[y_{k-1} - x_{k}] +  4L [{x}_{k} - y_{k}], \label{eq:vk} \\
		{x}_{k+1} & = & P_\Omega \left( y_{k} + \frac{a_{k-1} - 1}{a_{k}} (y_{k} - y_{k-1}) \right).  \label{eq:hatxk}
		\end{eqnarray}

\vspace{5pt}

\noindent {\bf{Step $\mb{5}$}}.  Set
\begin{eqnarray*}
 A_k & = & \left\{ \begin{array}{ll}
 \displaystyle \frac{2( l_f(y_k; x_{k+1}) - f(y_k))}{\| y_k - x_{k+1} \|^2}  & , {\rm{if}} \  \| y_k - x_{k+1} \| \not= 0 \\
 0 & , {\rm{otherwise}} \end{array} \right..
 \end{eqnarray*}
 Compute  $L_{k+1} = \max \{ 0, A_k \}$.  Increase the value of $k$ by 1, and return to Step 1.   

\vspace{10pt}

\noindent {\bf{Termination:}} If $\|v_{k} \| \leq \epsilon$, then output  $(y_{k},v_{k})$ and {\bf{exit}}.


\noindent \rule[0.5ex]{1\columnwidth}{1pt}

%
%
%

\vspace{10pt}

\noindent Note that in Step 2 of the algorithm, $y_k$ obtained is in ${\rm{dom}} \ h$.  It is easy to see that $v_{k}$ obtained in Step 4 of the algorithm is in $\nabla f(y_{k}) + \partial h(y_{k})$.  Therefore, if $\| v_{k} \| \leq \epsilon$ when the algorithm terminates, then $(y_{k},v_{k})$ is an $\epsilon$-approximate solution to Problem (\ref{CNO}).   Furthermore, observe that if $\Omega = \Re^n$, then there is no need to perform a projection in (\ref{eq:hatxk}) to obtain $x_{k+1}$, which is not necessarily in ${\rm{dom}}\ h$.

\begin{remark}\label{rem:mFISTA}
We observe that in Algorithm mFISTA,  $0 \leq L_k \leq \underline{m}\ (\leq L)$ by  (\ref{ineq:lowerlipschitzconsequence2}).  When $f$ is convex, then $L_k = \underline{m} = 0$ for all $k$.  It is worth noting that $\underline{m}$ is not an input parameter to the algorithm, and the algorithm is able to ``detect'' when $\underline{m} = 0$ (that is, $f$ is convex), resulting in an improved iteration complexity, as shown in Theorem \ref{thm:main}, in this case.
\end{remark}

\noindent We end this section with the following result on the sequence $\{ a_{k} \}$:
\begin{proposition}\label{prop:ak}
We have for $k \geq 0$,
\begin{eqnarray}\label{ineq:ak}
a_{k} \geq \frac{k+1}{4}
\end{eqnarray}
and 
\begin{eqnarray}\label{ineq:ak2}
a_{k} \leq 1 + k.
\end{eqnarray}
Hence, $\displaystyle \left\{ \frac{a_{k+1}}{a_{k}} \right\}$ is bounded.  In fact, $\displaystyle \frac{a_{k+1}}{a_{k}} \rightarrow 1$ as $k \rightarrow \infty$.
\end{proposition}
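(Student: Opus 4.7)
The plan is to solve the quadratic recursion (\ref{eq:ak}) explicitly and then extract each claim from simple estimates on the resulting square root. Since $a_k > 0$, taking the positive root of $a_k^2 - a_k - a_{k-1}^2 = 0$ gives the closed form
\[
a_k = \frac{1 + \sqrt{1 + 4a_{k-1}^2}}{2}, \qquad k \geq 1,
\]
with $a_0 = 1$. This identity is the workhorse for everything that follows.

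For the lower bound (\ref{ineq:ak}), I would use $\sqrt{1+4t^2} \geq 2t$ for $t \geq 0$ in the closed form to get $a_k \geq a_{k-1} + 1/2$. Induction then yields $a_k \geq 1 + k/2$, which is at least $(k+1)/4$ for all $k \geq 0$. For the upper bound (\ref{ineq:ak2}), I would use the reverse estimate $\sqrt{1+4t^2} \leq 1+2t$ (valid for $t \geq 0$ since $(1+2t)^2 = 1 + 4t + 4t^2 \geq 1 + 4t^2$) to obtain $a_k \leq a_{k-1} + 1$, and induct from $a_0 = 1$.

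Boundedness of $a_{k+1}/a_k$ is then immediate: the upper bound step yields $a_{k+1} \leq a_k + 1$, so $a_{k+1}/a_k \leq 1 + 1/a_k \leq 2$ (using $a_k \geq 1$). For the limit, I would divide the recursion $a_{k+1}^2 - a_{k+1} = a_k^2$ through by $a_k^2$ to get
\[
\left(\frac{a_{k+1}}{a_k}\right)^2 - \frac{1}{a_k}\cdot\frac{a_{k+1}}{a_k} = 1.
\]
The lower bound (\ref{ineq:ak}) forces $a_k \to \infty$, and since $a_{k+1}/a_k$ is bounded, the middle term tends to $0$. Hence $(a_{k+1}/a_k)^2 \to 1$, and positivity of $a_{k+1}/a_k$ gives $a_{k+1}/a_k \to 1$.

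There is no real obstacle here: everything reduces to two one-line estimates on $\sqrt{1+4t^2}$ plus a straightforward limiting argument on the quadratic relation. The only mild subtlety is to record that $a_k \geq 1$ for all $k$ (which follows from $a_0 = 1$ and monotonicity $a_k \geq a_{k-1} + 1/2$) so that dividing by $a_k$ and by $a_k^2$ in the final step is legitimate.
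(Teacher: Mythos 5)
Your proof is correct and follows essentially the same route as the paper: the same closed form $a_k = \bigl(1+\sqrt{1+4a_{k-1}^2}\bigr)/2$, the same two elementary bounds on the square root giving $a_k \geq a_{k-1}+\tfrac12$ and $a_k \leq a_{k-1}+1$, and a limit argument that again rests on $a_k \to \infty$ together with boundedness of the ratio (the paper divides the closed form by $a_k$ rather than dividing the quadratic recursion by $a_k^2$, but this is an immaterial variation). No gaps.
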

\begin{proof}
Note that from (\ref{eq:ak}), we have
\begin{eqnarray}\label{eq:ak2}
a_{i} = \frac{1 + \sqrt{1 + 4a_{i-1}^2}}{2}.
\end{eqnarray}
Hence, 
\begin{eqnarray*}
a_{i}  = \frac{1 + \sqrt{1 + 4 a_{i-1}^2}}{2} \geq \frac{1}{2} + a_{i-1}.
\end{eqnarray*}
Summing the above from $i = 1$ to $k$, we have
\begin{eqnarray*}
a_{k} \geq \frac{k}{2} + a_{0} \geq \frac{k}{2} \geq \frac{k+1}{4}.
\end{eqnarray*}
On the other hand,
\begin{eqnarray}\label{ineq:ak3}
a_{i} = \frac{1 + \sqrt{1 +  4 a_{i-1}^2}}{2} \leq \frac{ 1 + 1 + \sqrt{4a_{i-1}^2}}{2}  
=  1 + a_{i-1}.  
\end{eqnarray}
Hence, summing the above from $i = 1$ to $k$, we have
\begin{eqnarray*}
a_{k} \leq a_0 + k = 1 + k.
\end{eqnarray*}
Finally, boundedness of $\displaystyle \left\{ \frac{a_{k+1}}{a_{k}} \right\}$ follows from (\ref{ineq:ak}) and (\ref{ineq:ak2}).    It further follows from (\ref{eq:ak2}) and (\ref{ineq:ak}) that $\displaystyle \frac{a_{k+1}}{a_{k}} \rightarrow 1$ as $k \rightarrow \infty$.
\end{proof}

\section{Technical Preliminaries}\label{sec:technical}

\begin{proposition}\label{prop:upperlowerboundfk}
We have for $k \geq 1$ and $y \in \Omega$,
\begin{eqnarray*}
\hat{L}_k \| y - x_k \|^2 \leq f_{k}(y) - l_{f_{k}}(y; x_k) \leq L \| y - x_k \|^2,
\end{eqnarray*}
where
\begin{eqnarray*}
\hat{L}_k = \left\{ \begin{array}{cl}
			0 & ,{\rm{if}}\ y = y_{k-1} \\
			\displaystyle \frac{L_k - \underline{m}}{2} & ,{\rm{otherwise}}
			\end{array} \right..
\end{eqnarray*}
\end{proposition}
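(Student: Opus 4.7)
The plan is to first reduce the quantity $f_k(y) - l_{f_k}(y;x_k)$ to the analogous quantity for $f$ plus an explicit quadratic remainder, and then separately attack the two inequalities. Starting from the definition (\ref{eq:funfk}), I would write out $l_{f_k}(y;x_k)$ using $\nabla f_k(x_k) = \nabla f(x_k) + L_k(x_k - y_{k-1})$ and collect terms to get
\[
f_k(y) - l_{f_k}(y;x_k) = \bigl[f(y) - l_f(y;x_k)\bigr] + \frac{L_k}{2}\Bigl[\|y - y_{k-1}\|^2 - \|x_k - y_{k-1}\|^2 - 2\langle x_k - y_{k-1}, y - x_k\rangle\Bigr].
\]
The bracketed quadratic form simplifies to $\|y - x_k\|^2$ by the identity (\ref{eq:triangle}) applied with $a = x_k$, $b = y_{k-1}$, $c = y$. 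Thus the clean identity I would rely on throughout is $f_k(y) - l_{f_k}(y;x_k) = [f(y) - l_f(y;x_k)] + \frac{L_k}{2}\|y - x_k\|^2$.

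For the upper bound, I would plug (\ref{ineq:upperlipschitzconsequence}) into this identity to obtain $f_k(y) - l_{f_k}(y;x_k) \leq \frac{L + L_k}{2}\|y - x_k\|^2$, and then invoke Remark \ref{rem:mFISTA} (so that $L_k \leq \underline{m} \leq L$) to conclude $\frac{L + L_k}{2} \leq L$, giving the desired $L\|y - x_k\|^2$ bound.

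For the lower bound there are two sub-cases. When $y \neq y_{k-1}$, I would simply apply (\ref{ineq:lowerlipschitzconsequence2}) with $m = \underline{m}$ to the identity, which yields directly $f_k(y) - l_{f_k}(y;x_k) \geq \frac{L_k - \underline{m}}{2}\|y - x_k\|^2 = \hat{L}_k\|y - x_k\|^2$. When $y = y_{k-1}$, one must exploit the adaptive rule in Step 5 of the algorithm (at iteration $k-1$), which is exactly designed so that the inequality improves to $\geq 0$. If $\|y_{k-1} - x_k\| = 0$ the claim is vacuous, so assume otherwise; then $L_k = \max\{0, A_{k-1}\}$ and $A_{k-1} = 2(l_f(y_{k-1}; x_k) - f(y_{k-1}))/\|y_{k-1} - x_k\|^2$. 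Both possibilities ($L_k = 0$ with $A_{k-1} \leq 0$, or $L_k = A_{k-1} > 0$) give $\frac{L_k}{2}\|y_{k-1} - x_k\|^2 \geq l_f(y_{k-1}; x_k) - f(y_{k-1})$, which plugged into the identity yields $f_k(y_{k-1}) - l_{f_k}(y_{k-1}; x_k) \geq 0 = \hat{L}_k\|y_{k-1} - x_k\|^2$.

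I expect the main obstacle to be the special $y = y_{k-1}$ case of the lower bound: the generic convexity bound (\ref{ineq:lowerlipschitzconsequence2}) only delivers the non-positive value $\frac{L_k - \underline{m}}{2}\|y - x_k\|^2$, which is insufficient, so the improved bound $0$ cannot be obtained from $L$-smoothness alone and must be extracted from the algorithmic definition of $L_k$ as $\max\{0, A_{k-1}\}$. Everything else is bookkeeping around the single identity derived from (\ref{eq:triangle}).
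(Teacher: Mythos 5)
Your proposal is correct and follows essentially the same route as the paper's proof: the same identity $f_k(y) - l_{f_k}(y;x_k) = [f(y) - l_f(y;x_k)] + \frac{L_k}{2}\|y - x_k\|^2$ obtained via (\ref{eq:triangle}), the same use of (\ref{ineq:upperlipschitzconsequence}) with $L_k \leq \underline{m} \leq L$ for the upper bound, and the same case split for the lower bound, with the $y = y_{k-1}$ case resolved exactly as in the paper by invoking $L_k \geq A_{k-1}$ from Step 5. You correctly identify that the algorithmic definition of $L_k$, not smoothness, is what drives the improved bound in that special case.
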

\begin{proof}
For $y \in \Omega$, 
\begin{eqnarray*}
f_{k}(y) - l_{f_{k}}(y; x_k) & = & f(y) + \frac{L_k}{2}\|y - y_{k-1} \|^2 - f(x_k) - \frac{L_k}{2} \|x_k - y_{k-1}\|^2 - \\
&  & \langle \nabla f(x_k) + L_k (x_k - y_{k-1}), y - x_k \rangle \\
& = & f(y) - f(x_k) - \langle \nabla f(x_k), y - x_k \rangle + \\
&    & \frac{L_k}{2} [\| y - y_{k-1} \|^2 + 2 \langle x_k - y_{k-1}, x_k - y \rangle - \| x_k - y_{k-1} \|^2] \\
& = & f(y) - l_f(y;x_k) + \frac{L_k}{2} \| y - x_k \|^2,
\end{eqnarray*}
where the last equality follows from (\ref{eq:triangle}) and the definition of $l_f$.  We have by (\ref{ineq:upperlipschitzconsequence}) and Remark \ref{rem:mFISTA} that
\begin{eqnarray*}
 f(y) - l_f(y;x_k) + \frac{L_k}{2} \| y - x_k \|^2 \leq L \| y - x_k\|^2.
 \end{eqnarray*}
On the other hand, when $y = y_{k-1}$, by definition of $L_k$,
 \begin{eqnarray*}
f(y) - l_f(y;x_k) + \frac{L_k}{2} \| y - x_k \|^2 & = &  f(y_{k-1}) - l_f(y_{k-1};x_k) + \frac{L_k}{2} \| y_{k-1} - x_k \|^2 \\
& \geq &   f(y_{k-1}) - l_f(y_{k-1};x_k) +\frac{A_{k-1}}{2} \| y_{k-1} - x_k \|^2 \\
& = &  0.
 \end{eqnarray*} 
Otherwise, by (\ref{ineq:lowerlipschitzconsequence2}),
 \begin{eqnarray*}
f(y) - l_f(y;x_k) + \frac{L_k}{2} \| y - x_k \|^2  & \geq &   f(y) - l_f(y;x_k) + \frac{\underline{m}}{2} \| y - x_k \|^2 + \frac{L_k - \underline{m}}{2} \| y - x_k \|^2 \\
& \geq &  \frac{L_k - \underline{m}}{2} \| y - x_k \|^2 .
 \end{eqnarray*} 
\end{proof}

\vspace{10pt}

\noindent The above proposition leads to the following consequence:
\begin{proposition}\label{prop:convexityfk}
We have for $k \geq 1$ and  $y \in {\rm{dom}}\ h$,
\begin{eqnarray}
f_{k}(y_{k}) + h(y_{k}) - f_{k}(y) - h(y) + L\| y_{k} - {x}_{k} \|^2  & \leq &   2L [\| y - {x}_{k} \|^2 - \| y - y_{k} \|^2] - \nonumber \\
& &  \hat{L}_k  \| y - {x}_{k} \|^2.   \label{ineq:convexityfk}
\end{eqnarray}
\end{proposition}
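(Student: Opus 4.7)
The plan is to exploit the strong convexity of the subproblem defining $y_k$ in Step 2 of Algorithm mFISTA, and then convert statements about $l_{f_k}$ into statements about $f_k$ using Proposition \ref{prop:upperlowerboundfk}.

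First, I would observe that the function
\[
F(y) := l_{f_k}(y;x_k) + h(y) + 2L\|y - x_k\|^2
\]
is convex (the affine term $l_{f_k}$, the convex $h$, and the quadratic), and in fact $4L$-strongly convex because the quadratic $2L\|y-x_k\|^2$ contributes a Hessian $4LI$. Since $y_k$ is the (unconstrained) minimizer of $F$ and the subdifferential inequality for strongly convex functions yields, for every $y \in \dom h$,
\[
F(y) \geq F(y_k) + 2L\|y - y_k\|^2,
\]
we obtain after rearrangement
\[
l_{f_k}(y_k;x_k) - l_{f_k}(y;x_k) + h(y_k) - h(y) + 2L\|y_k - x_k\|^2 \leq 2L\|y - x_k\|^2 - 2L\|y - y_k\|^2.
\]

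Next, I would apply Proposition \ref{prop:upperlowerboundfk} at both $y_k$ and $y$ (both of which lie in $\dom h \subseteq \Omega$) to pass from $l_{f_k}$ to $f_k$. Specifically, the upper bound at $y_k$ gives $l_{f_k}(y_k;x_k) \geq f_k(y_k) - L\|y_k - x_k\|^2$, while the lower bound at $y$ gives $l_{f_k}(y;x_k) \leq f_k(y) - \hat{L}_k \|y - x_k\|^2$. Subtracting these and substituting into the preceding display yields
\[
f_k(y_k) - f_k(y) + h(y_k) - h(y) + L\|y_k - x_k\|^2 + \hat{L}_k \|y - x_k\|^2 \leq 2L[\|y - x_k\|^2 - \|y - y_k\|^2],
\]
which is exactly \eqref{ineq:convexityfk} after moving the $\hat{L}_k$ term to the right-hand side.

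There is no real obstacle here since the two ingredients (strong convexity of the proximal subproblem and the two-sided descent estimate from Proposition \ref{prop:upperlowerboundfk}) combine linearly; the only point requiring minor care is getting the coefficient $2L$ on $\|y - y_k\|^2$ right, which is why the coefficient in the subproblem is chosen to be $2L$ rather than $L$, so that after cancelling the $L\|y_k - x_k\|^2$ coming from the upper bound of Proposition \ref{prop:upperlowerboundfk} a term $L\|y_k - x_k\|^2$ still remains on the left-hand side of \eqref{ineq:convexityfk}.
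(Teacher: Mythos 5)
Your proof is correct and follows essentially the same route as the paper's: both arguments combine the $4L$-strong convexity of the subproblem in Step 2 (yielding the extra $2L\|y-y_k\|^2$ at the minimizer $y_k$) with the two-sided estimate of Proposition \ref{prop:upperlowerboundfk}, using the upper bound at $y_k$ and the lower bound at $y$. The only difference is the order in which the linearization $l_{f_k}$ is traded for $f_k$, which is immaterial.
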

\begin{proof}
By Proposition \ref{prop:upperlowerboundfk}, for $y \in {\rm{dom}}\ h$, we have
\begin{eqnarray*}
f_{k}(y) + h(y) + (2	L - \hat{L}_k) \| y - {x}_{k} \|^2 \geq l_{f_{k}}(y; x_{k}) + h(y) + 2L \| y - {x}_{k} \|^2.
\end{eqnarray*}
Now, since $y_{k}$ is the optimal solution to $l_{f_{k}}(y; x_{k}) + h(y) + 2L \| y - {x}_{k} \|^2$ over $y \in \Re^n$, we have
\begin{eqnarray*}
 l_{f_{k}}(y; x_{k}) + h(y) + 2L \| y - {x}_{k} \|^2 \geq  l_{f_{k}}(y_{k};x_{k}) + h(y_{k}) + 2L \|y_{k} - {x}_{k}\|^2 + 2L \| y - y_{k}\|^2.
\end{eqnarray*}
Hence,
\begin{eqnarray*}
&   & f_{k}(y) + h(y) + (2L - \hat{L}_k) \| y - {x}_{k} \|^2  \\
 & \geq & l_{f_{k}}(y_{k};x_{k}) + h(y_{k}) + 2L \|y_{k} - {x}_{k}\|^2 + 2L \| y - y_{k}\|^2 \\
& = & l_{f_{k}}(y_{k};x_{k}) + L \|y_{k} - x_{k} \|^2 + h(y_{k}) +  L \| y_{k} - {x}_{k} \|^2  +   2L \| y - y_{k}\|^2  \\
& \geq & f_{k}(y_{k}) + h(y_{k}) + L \| y_{k} - {x}_{k} \|^2 + 2L \| y - y_{k}\|^2,
\end{eqnarray*}
where the second inequality follows from Proposition \ref{prop:upperlowerboundfk}, and we prove (\ref{ineq:convexityfk}).
\end{proof}

%
%
%
\vspace{10pt}

\begin{proposition}\label{prop:technical}
We have for $k \geq 2$, 
\begin{eqnarray}
& & a_{k-1}(a_{k-1}-1)[\| y_{k-1} - {x}_{k} \|^2 - \| y_{k-1} - y_{k} \|^2] + a_{k-1}[\| {x}_{k} - y^\ast \|^2 - \| y_{k} - y^\ast \|^2] \nonumber \\
& \leq & \|a_{k-2}[ y_{k-1} - y_{k-2}]  + y_{k-2} - y^\ast \|^2 - \| a_{k-1}[ y_{k} - y_{k-1}]  + y_{k-1} - y^\ast \|^2.  \label{ineq:technical}
\end{eqnarray}
\end{proposition}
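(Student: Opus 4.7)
The plan is to collapse the left-hand side of (\ref{ineq:technical}) into a single difference-of-squares whose subtracted term matches the subtracted term on the right, and then reduce the remaining inequality to the nonexpansiveness of $P_\Omega$ from (\ref{ineq:projection}). The algebraic engine is the index-shifted form of (\ref{eq:ak}), namely $a_{k-1}(a_{k-1}-1) = a_{k-2}^2$ for $k \geq 2$, which fuses the two coefficients appearing on the LHS into a single quantity.

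First I would apply the elementary factorization $\|u\|^2 - \|v\|^2 = \langle u-v,\,u+v\rangle$ to each of the two bracketed differences on the LHS. With the pairs chosen so that $u - v = \pm(x_k - y_k)$ in both cases, one can pull a common factor of $x_k - y_k$ outside an inner product. Substituting $a_{k-2}^2 = a_{k-1}(a_{k-1}-1)$ and rearranging, I expect the LHS to collapse to
\begin{eqnarray*}
\| a_{k-1} x_k - (a_{k-1}-1) y_{k-1} - y^* \|^2 - \| a_{k-1} y_k - (a_{k-1}-1) y_{k-1} - y^* \|^2.
\end{eqnarray*}
Since $a_{k-1} y_k - (a_{k-1}-1) y_{k-1} - y^* = a_{k-1}(y_k - y_{k-1}) + y_{k-1} - y^*$, the second term matches exactly the subtracted term on the RHS of (\ref{ineq:technical}).

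It therefore suffices to prove $\| a_{k-1} x_k - (a_{k-1}-1) y_{k-1} - y^*\|^2 \leq \| a_{k-2}(y_{k-1} - y_{k-2}) + y_{k-2} - y^*\|^2$. Setting $z_{k-1} := y_{k-1} + \frac{a_{k-2}-1}{a_{k-1}}(y_{k-1} - y_{k-2})$, so that $x_k = P_\Omega(z_{k-1})$ by (\ref{eq:hatxk}), a direct computation using $a_{k-1} \cdot \frac{a_{k-2}-1}{a_{k-1}} = a_{k-2}-1$ yields
\begin{eqnarray*}
a_{k-1} z_{k-1} - (a_{k-1}-1) y_{k-1} - y^* = a_{k-2}(y_{k-1} - y_{k-2}) + y_{k-2} - y^*,
\end{eqnarray*}
so the target inequality factors as $\| x_k - \tilde y \|^2 \leq \| z_{k-1} - \tilde y \|^2$ with $\tilde y := \frac{(a_{k-1}-1) y_{k-1} + y^*}{a_{k-1}}$.

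Finally, $a_{k-1} \geq 1$ by the argument in Proposition \ref{prop:ak} together with $a_0 = 1$, so $\tilde y$ is a convex combination of $y_{k-1}$ and $y^*$. Both points lie in $\dom h \subseteq \Omega$ (the optimizer $y^*$ exists in $\dom h$ by the discussion following (\ref{ineq:lowerlipschitzconsequence2})), and convexity of $\Omega$ places $\tilde y \in \Omega$. Applying (\ref{ineq:projection}) with $x = z_{k-1}$ and $y = \tilde y$ gives the required bound. The main obstacle I anticipate is the bookkeeping in the collapse step and guessing the correct shifted point $\tilde y$; once these are in place, the projection inequality does the remaining work mechanically.
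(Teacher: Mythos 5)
Your proposal is correct and follows essentially the same route as the paper: collapse the left-hand side (via the expansion/completion-of-squares identity (\ref{eq:triangle}), using that the coefficients sum to $a_{k-1}(a_{k-1}-1)+a_{k-1}=a_{k-1}^2$) into $\| a_{k-1} x_k - ((a_{k-1}-1) y_{k-1} + y^*) \|^2 - \| a_{k-1} y_k - ((a_{k-1}-1) y_{k-1} + y^*) \|^2$, then bound the first term by the nonexpansiveness (\ref{ineq:projection}) of $P_\Omega$ at the point $\tilde y = (1-1/a_{k-1})y_{k-1} + (1/a_{k-1})y^*$. One small remark: the recursion $a_{k-1}(a_{k-1}-1)=a_{k-2}^2$ is not actually what drives the collapse (only the trivial identity above is needed there), but this does not affect the argument, and your explicit check that $\tilde y \in \Omega$ via $a_{k-1}\geq 1$ and convexity is a detail the paper leaves implicit.
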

\begin{proof}
Note that
\begin{eqnarray*}
& & a_{k-1}(a_{k-1}-1)[\| y_{k-1} - {x}_{k} \|^2 - \| y_{k-1} - y_{k} \|^2] + a_{k-1}[\| {x}_{k} - y^\ast \|^2 - \| y_{k} - y^\ast \|^2] \\
& = &  a_{k-1}(a_{k-1}-1)[\| y_{k-1} - {x}_{k} \|^2 - \| (y_{k-1} - {x}_{k}) + ({x}_{k} - y_{k}) \|^2] + \\
&  & a_{k-1}[\| {x}_{k} - y^\ast \|^2 - \| (y_{k} - {x}_{k}) + ({x}_{k} - y^\ast) \|^2] \\
& = & a_{k-1}(a_{k-1}-1)[- \| {x}_{k} - y_{k} \|^2 - 2 \langle y_{k-1} - {x}_{k}, {x}_{k} - y_{k} \rangle] + \\
& &  a_{k-1}[-\| {x}_{k} - y_{k}\|^2 - 2\langle y_{k} - {x}_{k}, {x}_{k} - y^\ast \rangle] \\
& = & - a_{k-1}^2 \| {x}_{k} - y_{k} \|^2 - 2\langle a_{k-1}[{x}_{k} - y_{k}], y^\ast - {x}_{k} + (a_{k-1} - 1)(y_{k-1} - {x}_{k}) \rangle \\
& = & - a_{k-1}^2 \| y_{k} - {x}_{k} \|^2 -2 \langle a_{k-1}[y_{k} - {x}_{k}], a_{k-1} {x}_{k} - ((a_{k-1}-1)y_{k-1}+y^\ast) \rangle \\
& = & \| a_{k-1}{x}_{k} -  ((a_{k-1}-1)y_{k-1}+y^\ast) \|^2 - \| a_{k-1} y_{k} - ((a_{k-1}-1)y_{k-1} + y^\ast) \|^2,
 \end{eqnarray*}
 where the last equality follows from (\ref{eq:triangle}).  Now,
 \begin{eqnarray*}
& &  \left\| {x}_{k} - \left( \left(1 - \frac{1}{a_{k-1}}\right)y_{k-1} + \frac{1}{a_{k-1}}y^\ast \right) \right\| \\
& = & \left\| P_\Omega \left( y_{k-1} + \frac{a_{k-2} - 1}{a_{k-1}} (y_{k-1} - y_{k-2}) \right) - \left( \left(1 - \frac{1}{a_{k-1}}\right)y_{k-1} + \frac{1}{a_{k-1}}y^\ast \right) \right\| \\
& \leq & \left\|  \left( y_{k-1} + \frac{a_{k-2} - 1}{a_{k-1}} (y_{k-1} - y_{k-2}) \right) - \left( \left(1 - \frac{1}{a_{k-1}}\right)y_{k-1} + \frac{1}{a_{k-1}}y^\ast \right) \right\| \\
& = & \left\| \frac{a_{k-2}}{a_{k-1}}y_{k-1} - \frac{1}{a_{k-1}}[(a_{k-2}-1)y_{k-2} + y^\ast] \right\|,
 \end{eqnarray*} 
where the inequality follows from (\ref{ineq:projection}).  The result in the proposition then follows from the above derivations.
\end{proof}


\section{Iteration Convergence Rates of Algorithm mFISTA - Non-asymptotic and Asymptotic}\label{sec:mFISTA}

%
%
%

%

\noindent We first state and prove the following key inequality, which prepares us for the iteration complexity results for Algorithm mFISTA in Theorem \ref{thm:main}:
\begin{lemma}\label{lem:keyinequality}
We have for $k \geq 2$,
\begin{eqnarray}
& & a_{k-1}^2[\phi(y_{k}) - \phi(y^\ast)] - a_{k-2}^2 [\phi(y_{k-1}) - \phi(y^\ast)] + \frac{L_k}{2}a_{k-1}^2\|y_{k} - y_{k-1} \|^2 +  L a_{k-1}^2 \| y_{k} - {x}_{k} \|^2 \nonumber \\
& \leq &   2 L [\|a_{k-2}[ y_{k-1} - y_{k-2}]  + y_{k-2} - y^\ast \|^2 - \| a_{k-1}[ y_{k} - y_{k-1}]  + y_{k-1} - y^\ast \|^2] + \nonumber \\
&   & \frac{\underline{m} - L_k}{2} a_{k-1}  \| y^\ast - y_{k} \|^2 + \frac{L_k}{2} a_{k-1} \| y_{k-1} - y^\ast \|^2. \label{ineq:convexityfk7}
\end{eqnarray}
\end{lemma}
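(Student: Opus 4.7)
The plan is to invoke Proposition \ref{prop:convexityfk} at two points, $y = y_{k-1}$ and $y = y^\ast$, combine them with weights chosen so that the quadratic block on the resulting RHS matches exactly the LHS of Proposition \ref{prop:technical}, and then perform standard algebraic bookkeeping using the recursion (\ref{eq:ak}).

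First, applying Proposition \ref{prop:convexityfk} at $y = y_{k-1}$ (where $\hat L_k = 0$) gives
\begin{eqnarray*}
\mathrm{(A)} \quad f_k(y_k) + h(y_k) - f_k(y_{k-1}) - h(y_{k-1}) + L \|y_k - x_k\|^2 \leq 2L\bigl[\|y_{k-1} - x_k\|^2 - \|y_{k-1} - y_k\|^2\bigr].
\end{eqnarray*}
Next, at $y = y^\ast$, since by Remark \ref{rem:mFISTA} we have $L_k \leq \underline m$, so $-\hat L_k = (\underline m - L_k)/2 \geq 0$, we obtain
\begin{eqnarray*}
\mathrm{(B)} \quad f_k(y_k) + h(y_k) - f_k(y^\ast) - h(y^\ast) + L \|y_k - x_k\|^2 \leq 2L\bigl[\|y^\ast - x_k\|^2 - \|y^\ast - y_k\|^2\bigr] + \tfrac{\underline m - L_k}{2}\|y^\ast - x_k\|^2.
\end{eqnarray*}
I would then form the combination $a_{k-1}\bigl[(a_{k-1}-1)\cdot\mathrm{(A)} + \mathrm{(B)}\bigr]$; by design, the $2L$-block on the resulting RHS is exactly the LHS of (\ref{ineq:technical}), so invoking Proposition \ref{prop:technical} immediately produces the telescoping $2L[\cdots]$ term stated in the lemma.

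For the LHS bookkeeping, I would use $a_{k-1}(a_{k-1}-1) = a_{k-2}^2$ from (\ref{eq:ak}), together with its consequence $a_{k-1}^2 - a_{k-2}^2 - a_{k-1} = 0$, to collapse the three $(f_k+h)$-contributions into $a_{k-1}^2\bigl[(f_k+h)(y_k) - (f_k+h)(y^\ast)\bigr] - a_{k-2}^2\bigl[(f_k+h)(y_{k-1}) - (f_k+h)(y^\ast)\bigr]$. Substituting the definition $f_k(y) = f(y) + \tfrac{L_k}{2}\|y - y_{k-1}\|^2$ from (\ref{eq:funfk}) then converts these into the $\phi$-differences required by the lemma together with quadratic $L_k$-terms: $\tfrac{L_k}{2}a_{k-1}^2 \|y_k - y_{k-1}\|^2$ remains on the LHS in the desired form, the $y_{k-1}$ block contributes nothing since $\|y_{k-1} - y_{k-1}\|^2 = 0$, and $-\tfrac{L_k}{2}a_{k-1}\|y^\ast - y_{k-1}\|^2$ transfers to the RHS to supply the term $\tfrac{L_k}{2}a_{k-1}\|y_{k-1} - y^\ast\|^2$.

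The one genuinely delicate step will be reconciling the residual $\tfrac{\underline m - L_k}{2}a_{k-1}\|y^\ast - x_k\|^2$ arising from (B) with the term $\tfrac{\underline m - L_k}{2}a_{k-1}\|y^\ast - y_k\|^2$ stated on the RHS of the lemma, since the two norms are not directly comparable in general. I anticipate handling this by expanding $\|y^\ast - x_k\|^2$ via identity (\ref{eq:triangle}) with $a = y^\ast$, $b = x_k$, $c = y_k$, and then controlling the resulting cross term through the first-order optimality of $y_k$ for the $4L$-strongly convex subproblem (\ref{eq:y_k}); the leftover $\|y_k - x_k\|^2$ piece should then be absorbable into the $La_{k-1}^2\|y_k - x_k\|^2$ term already present on the LHS, with the degenerate case $y^\ast = y_{k-1}$ (for which $\hat L_k = 0$ throughout) handled separately. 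This reconciliation is where I expect the proof to depart from purely mechanical algebra.
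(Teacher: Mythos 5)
Your proposal follows the paper's proof essentially step for step: your (A) and (B) are the paper's (\ref{ineq:convexityfk2}) and (\ref{ineq:convexityfk3}), the weighted combination $a_{k-1}[(a_{k-1}-1)\cdot(\mathrm{A})+(\mathrm{B})]$ is exactly how the paper forms (\ref{ineq:convexityfk4})--(\ref{ineq:convexityfk5}), Proposition \ref{prop:technical} is invoked on the same $2L$-block, and the final bookkeeping with $a_{k-1}(a_{k-1}-1)=a_{k-2}^2$ and the expansion of $f_k$ via (\ref{eq:funfk}) is identical (including the observation that the $y_{k-1}$-block vanishes and that $-\tfrac{L_k}{2}a_{k-1}\|y^\ast-y_{k-1}\|^2$ migrates to the right-hand side).

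The one step you flag as delicate --- reconciling $\tfrac{\underline m - L_k}{2}a_{k-1}\|y^\ast - x_k\|^2$ with the $\tfrac{\underline m - L_k}{2}a_{k-1}\|y^\ast - y_k\|^2$ appearing in the lemma --- is precisely where the paper is silent: between (\ref{ineq:convexityfk5}) and (\ref{ineq:convexityfk6}) the term $\|y^\ast - x_k\|^2$ is simply replaced by $\|y^\ast - y_k\|^2$ with no justification, and the two norms are indeed not comparable in general. So you have correctly located the only non-mechanical point in the argument, but your proposed repair is left as a sketch, and as sketched it will not recover the lemma with the stated constants: expanding $\|y^\ast - x_k\|^2 = \|y^\ast - y_k\|^2 + 2\langle y^\ast - y_k, y_k - x_k\rangle + \|y_k - x_k\|^2$ leaves a sign-indefinite cross term whose absorption (by Young's inequality, or via the optimality condition of (\ref{eq:y_k})) necessarily eats into the coefficient of $L a_{k-1}^2\|y_k-x_k\|^2$ on the left and inflates the coefficient of $\|y^\ast - y_k\|^2$ on the right. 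The cleaner resolution is to keep the term as $\tfrac{\underline m - L_k}{2}a_{k-1}\|y^\ast - x_k\|^2$ in the conclusion: since $x_k$ is the projection of $y_{k-1}+\tfrac{a_{k-2}-1}{a_{k-1}}(y_{k-1}-y_{k-2})$ with $0\le \tfrac{a_{k-2}-1}{a_{k-1}}\le 1$, inequality (\ref{ineq:projection}) gives $\|x_k-y^\ast\|\le 4C$, so every downstream use of the lemma (in particular (\ref{ineq:main2})) survives with adjusted constants. As written, then, your proof has a gap at this step --- but it is the same gap the paper itself glosses over, and everything else in your plan matches the paper's argument exactly.
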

\begin{proof}
Letting $y = y_{k-1}$ and $y = y^\ast$ in (\ref{ineq:convexityfk}) lead to
\begin{eqnarray}
&   &  f_{k}(y_{k}) + h(y_{k}) - f_{k}(y_{k-1}) - h(y_{k-1}) +  L \| y_{k} - {x}_{k} \|^2 \nonumber \\
&  \leq &   2L[\| y_{k-1} - {x}_{k} \|^2 - \| y_{k-1} - y_{k} \|^2],  \label{ineq:convexityfk2} \\
&   &  f_{k}(y_{k}) + h(y_{k}) - f_{k}(y^\ast) - h(y^\ast) + L\| y_{k} - {x}_{k} \|^2 \nonumber \\
&  \leq &   2L[\| y^\ast - {x}_{k} \|^2 - \| y^\ast - y_{k} \|^2] + \frac{\underline{m} - L_k}{2}  \| y^\ast - {x}_{k} \|^2, \label{ineq:convexityfk3}
\end{eqnarray}
respectively.  Now, multiply (\ref{ineq:convexityfk2}) by $a_{k-1} - 1$ and add the resulting inequality to (\ref{ineq:convexityfk3})  gives rise to
\begin{eqnarray}
& & a_{k-1}[f_{k}(y_{k}) + h(y_{k}) - f_{k}(y^\ast) - h(y^\ast)] - (a_{k-1} - 1)[f_{k}(y_{k-1}) + h(y_{k-1}) -  f_{k}(y^\ast) - h(y^\ast)] + \nonumber \\
& & L a_{k-1} \| y_{k} - {x}_{k} \|^2  \nonumber \\
& \leq & 2L [(a_{k-1}-1)[\| y_{k-1} - {x}_{k} \|^2 - \| y_{k-1} - y_{k} \|^2] + 2L[\| y^\ast - {x}_{k} \|^2 - \| y^\ast - y_{k} \|^2]] + \nonumber \\
&   & \frac{\underline{m} - L_k}{2}  \| y^\ast - x_k \|^2. \label{ineq:convexityfk4}
\end{eqnarray}
Next, mutiply the inequality (\ref{ineq:convexityfk4}) by $a_{k-1}$ and noting (\ref{eq:ak}) then leads to
\begin{eqnarray}
& & a_{k-1}^2[f_{k}(y_{k}) + h(y_{k}) - f_{k}(y^\ast) - h(y^\ast)] - a_{k-2}^2[f_{k}(y_{k-1}) + h(y_{k-1}) - f_{k}(y^\ast) - h(y^\ast)] + \nonumber \\
& & L a_{k-1}^2 \| y_{k} - {x}_{k} \|^2 \nonumber \\
& \leq &  2 L a_{k-1}[(a_{k-1}-1)[\| y_{k-1} - {x}_{k} \|^2 - \| y_{k-1} - y_{k} \|^2] + [\| y^\ast - {x}_{k} \|^2 - \| y^\ast - y_{k} \|^2]] + \nonumber \\
&   & \frac{\underline{m} - L_k}{2} a_{k-1} \| y^\ast - x_k \|^2.  \label{ineq:convexityfk5}
\end{eqnarray}
Applying (\ref{ineq:technical}) in Proposition \ref{prop:technical} on the right hand side of the inequality in (\ref{ineq:convexityfk5}), we have
\begin{eqnarray}
& & a_{k-1}^2[f_{k}(y_{k}) + h(y_{k}) - f_{k}(y^\ast) - h(y^\ast)] - a_{k-2}^2[f_{k}(y_{k-1}) + h(y_{k-1}) - f_{k}(y^\ast) - h(y^\ast)] + \nonumber \\
& & La_{k-1}^2 \| y_{k} - {x}_{k} \|^2  \nonumber \\
& \leq &   2 L [\|a_{k-2}[ y_{k-1} - y_{k-2}]  + y_{k-2} - y^\ast \|^2 - \| a_{k-1}[ y_{k} - y_{k-1}]  + y_{k-1} - y^\ast \|^2] + \nonumber \\
&   & \frac{\underline{m} - L_k}{2} a_{k-1} \| y^\ast - y_{k} \|^2. \label{ineq:convexityfk6}
\end{eqnarray}
Now,
\begin{eqnarray*}
& & a_{k-1}^2[f_{k}(y_{k}) + h(y_{k}) - f_{k}(y^\ast) - h(y^\ast)] - a_{k-2}^2[f_{k}(y_{k-1}) + h(y_{k-1}) - f_{k}(y^\ast) - h(y^\ast)] \nonumber \\
& = & a_{k-1}^2[\phi(y_{k}) - \phi(y^\ast)]  + \frac{L_k}{2}a_{k-1}^2[\|y_{k} - y_{k-1} \|^2 - \| y^\ast - y_{k-1} \|^2]  - a_{k-2}^2[\phi(y_{k-1}) - \phi(y^\ast)] + \\
& & \frac{L_k}{2}a_{k-2}^2  \| y^\ast - y_{k-1} \|^2 \\
& = & a_{k-1}^2[\phi(y_{k}) - \phi(y^\ast)] - a_{k-2}^2 [\phi(y_{k-1}) - \phi(y^\ast)] + \frac{L_k}{2}a_{k-1}^2 \|y_{k} - y_{k-1} \|^2 - \\
&  & \frac{L_k}{2}( a_{k-2}^2 - a_{k-1}^2) \| y^\ast - y_{k-1} \|^2 \\
& = & a_{k-1}^2[\phi(y_{k} - \phi(y^\ast)] - a_{k-2}^2 [\phi(y_{k-1}) - \phi(y^\ast)] + \frac{L_k}{2}a_{k-1}^2  \|y_{k} - y_{k-1} \|^2 -  \frac{L_k}{2} a_{k-1} \| y_{k-1} - y^\ast \|^2,
\end{eqnarray*}
where the last equality follows from (\ref{eq:ak}).   From the above and (\ref{ineq:convexityfk6}), upon algebraic manipulations, we have
\begin{eqnarray*}
& & a_{k-1}^2[\phi(y_{k} - \phi(y^\ast)] - a_{k-2}^2 [\phi(y_{k-1}) - \phi(y^\ast)] + \frac{L_k}{2} a_{k-1}^2 \|y_{k} - y_{k-1} \|^2 +  La_{k-1}^2 \| y_{k} - {x}_{k} \|^2 \\
& \leq &   2 L [\|a_{k-2}[ y_{k-1} - y_{k-2}]  + y_{k-2} - y^\ast \|^2 - \| a_{k-1}[ y_{k} - y_{k-1}]  + y_{k-1} - y^\ast \|^2] + \\
&   & \frac{\underline{m} - L_k}{2}a_{k-1} \| y^\ast - y_{k} \|^2 + \frac{L_k}{2}a_{k-1} \| y_{k-1} - y^\ast \|^2, 
\end{eqnarray*}
and the lemma is proved.
\end{proof}

\vspace{10pt}

\noindent Summing the inequality in Lemma \ref{lem:keyinequality} from $k = 2$ to $n$ leads to
\begin{eqnarray}
& & a_{n-1}^2[\phi(y_{n}) - \phi(y^\ast)] - a_{0}^2 [\phi(y_{1}) - \phi(y^\ast)] + \frac{1}{2} \sum_{k=2}^n L_k a_{k-1}^2 \|y_{k} - y_{k-1} \|^2 + L \sum_{k=2}^n  a_{k-1}^2 \| y_{k} - {x}_{k} \|^2 \nonumber \\
& \leq &   2 L [\|a_{0}[ y_{1} - y_{0}]  + y_{0} - y^\ast \|^2 - \| a_{n-1}[ y_{n} - y_{n-1}]  + y_{n-1} - y^\ast \|^2] + \nonumber \\
&   &   \frac{1}{2} \sum_{k=2}^n (\underline{m} - L_k) a_{k-1} \| y^\ast - y_{k} \|^2 + \frac{1}{2} \sum_{k=2}^n L_k a_{k-1} \| y_{k-1} - y^\ast \|^2. \label{ineq:convexityfk8}
\end{eqnarray}

\noindent We are now ready for the main theorem in this paper:
\begin{theorem}\label{thm:main}
Given tolerance $\epsilon > 0$.  Algorithm mFISTA outputs an $\epsilon$-approximate solution to Problem (\ref{CNO}) in at most $\mathcal{O}(\epsilon^{-2})$ iterations.  In the case when $f$ in Problem (\ref{CNO}) is convex, the iteration complexity becomes $\mathcal{O}(\epsilon^{-2/3})$.   
\end{theorem}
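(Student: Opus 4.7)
The plan is to aggregate Lemma \ref{lem:keyinequality} over $k = 2,\ldots,n$ (as already done in (\ref{ineq:convexityfk8})) and extract from it two sum bounds, then reassemble $\|v_k\|$ from these sums. To handle the right-hand side of (\ref{ineq:convexityfk8}), I will (i) bound the initial boundary term using $a_0 = 1$, $y_0, y_1 \in \dom h$, and the fact that $\dom h$ has diameter at most $2C$; (ii) use $\underline{m} - L_k \leq \underline{m}$ and $L_k \leq \underline{m}$ from Remark \ref{rem:mFISTA} together with $\|y_k - y^*\|, \|y_{k-1} - y^*\| \leq 2C$; and (iii) apply $\sum_{k=2}^n a_{k-1} \leq \mathcal{O}(n^2)$ from Proposition \ref{prop:ak}. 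Since the leading left-hand term $a_{n-1}^2 [\phi(y_n) - \phi(y^*)]$ is nonnegative and $\phi(y_1) - \phi(y^*)$ is just a constant, this should yield
\[
L \sum_{k=2}^n a_{k-1}^2 \|y_k - x_k\|^2 + \tfrac{1}{2}\sum_{k=2}^n L_k a_{k-1}^2 \|y_k - y_{k-1}\|^2 \leq \mathcal{O}(n^2),
\]
with the right-hand side collapsing to $\mathcal{O}(1)$ in the convex case, where $\underline{m} = 0$ forces $L_k \equiv 0$ (Remark \ref{rem:mFISTA}) and so the sums with $\underline{m}$ in (\ref{ineq:convexityfk8}) vanish.

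Next, I bound $\|v_k\|$ in terms of $\|y_k - x_k\|$ and $\|y_k - y_{k-1}\|$. Starting from (\ref{eq:vk}) and using $\|\nabla f(y_k) - \nabla f(x_k)\| \leq L \|y_k - x_k\|$, I get $\|v_k\| \leq 5L \|y_k - x_k\| + L_k \|y_{k-1} - x_k\|$, and then apply the triangle inequality $\|y_{k-1} - x_k\| \leq \|y_k - y_{k-1}\| + \|y_k - x_k\|$ together with $L_k \leq \underline{m}$ to arrive at
\[
\|v_k\|^2 \leq 2(5L + \underline{m})^2 \|y_k - x_k\|^2 + 2 L_k^2 \|y_k - y_{k-1}\|^2.
\]
The key observation that closes the argument is $L_k^2 \leq \underline{m} L_k$, which turns the quadratic-in-$L_k$ coefficient of $\|y_k - y_{k-1}\|^2$ into a linear one, exactly matching the second sum bounded in the previous paragraph. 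Combined with the first sum, this gives $\sum_{k=2}^n a_{k-1}^2 \|v_k\|^2 \leq \mathcal{O}(n^2)$ in the nonconvex case and $\leq \mathcal{O}(1)$ in the convex case.

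Finally, I convert the sum bound to an iteration complexity via Proposition \ref{prop:ak}. Since $a_{k-1}^2 \geq (k+1)^2/16$, the sum above implies $\sum_{k=2}^n k^2 \|v_k\|^2 = \mathcal{O}(n^2)$ (resp.\ $\mathcal{O}(1)$), and because $\sum_{k=2}^n k^2 = \Theta(n^3)$, the pigeonhole principle gives $\min_{2 \leq k \leq n}\|v_k\| = \mathcal{O}(n^{-1/2})$ in general and $\mathcal{O}(n^{-3/2})$ when $f$ is convex. Solving $\min_k \|v_k\| \leq \epsilon$ for $n$ then yields the claimed $\mathcal{O}(\epsilon^{-2})$ and $\mathcal{O}(\epsilon^{-2/3})$ complexities. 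The main obstacle I expect is precisely the cross term $L_k \|y_{k-1} - x_k\|$ appearing in $v_k$ in the nonconvex regime: the naive diameter bound on $\|y_{k-1}-x_k\|$ would leave an $\mathcal{O}(1)$ residual in $\|v_k\|$ and ruin the argument. The trick of splitting this term through the triangle inequality and invoking $L_k^2 \leq \underline{m} L_k$ to align it with the linear-in-$L_k$ sum already controlled by (\ref{ineq:convexityfk8}) is what makes the bound work uniformly across the convex and nonconvex cases.
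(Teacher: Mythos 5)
Your proposal is correct and matches the paper's own proof in all essentials: both rest on the summed inequality (\ref{ineq:convexityfk8}), the pointwise bound $\|v_k\|\le (5L+L_k)\|y_k-x_k\|+L_k\|y_k-y_{k-1}\|$ with the linearization of the $L_k^2$ coefficient (you use $L_k^2\le\underline{m}L_k$, the paper uses $L_k\le L$), and an averaging/pigeonhole step comparing $\sum_{k} a_{k-1}^2=\Theta(n^3)$ against a right-hand side of order $n^2$ in general and order $1$ when $f$ is convex. The only cosmetic difference is that you carry the weighted sum $\sum_k a_{k-1}^2\|v_k\|^2$ to the very end before taking the minimum, whereas the paper takes the minimum of $36L\|x_k-y_k\|^2+L_k\|y_k-y_{k-1}\|^2$ first and then relates $\min_k\|v_k\|$ to it.
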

\begin{proof}
From (\ref{eq:vk}), we have
\begin{eqnarray*}
\| v_{k} \| & =  & \| \nabla f(y_{k}) - \nabla f(x_{k}) + L_k[y_{k-1} - x_{k}] +  4L [{x}_{k} - y_{k}] \| \\
& \leq & (5L + L_k) \| x_k - y_k \| + L_k \| y_k - y_{k-1} \| \\
& \leq & 6L \| x_k - y_k \| + L_k \| y_k - y_{k-1} \|.
\end{eqnarray*}
Hence,
\begin{eqnarray}\label{ineq:main}
\min_{1 \leq k \leq n} \| v_k \| \leq 2 \sqrt{L}  \left( \min_{2 \leq k \leq n} \left( 36L \| x_k - y_k \|^2 + L_k \| y_k - y_{k-1} \|^2  \right) \right)^{1/2}.
\end{eqnarray} 
On the other hand, from (\ref{ineq:convexityfk8}), the following holds:
\begin{eqnarray}
&  & \frac{1}{36} \left( \min_{2 \leq k \leq n}  \left( 36L \| x_k - y_k \|^2 + L_k \| y_k - y_{k-1} \|^2 \right) \right) \sum_{k=2}^{n} a_{k-1}^2  \nonumber \\
& \leq &  2 L \|a_{0}[ y_{1} - y_{0}]  + y_{0} - y^\ast \|^2 + \frac{\underline{m}}{2} \sum_{k=2}^n a_{k-1} \| y^\ast - y_{k} \|^2 + \frac{1}{2} \sum_{k=2}^n L_k a_{k-1} \| y_{k-1} - y^\ast \|^2 \nonumber \\
& \leq & 2 L\|a_{0}[ y_{1} - y_{0}]  + y_{0} - y^\ast \|^2 + 2C^2\left( \underline{m} +  \max_{2 \leq k \leq n} L_k \right) \sum_{k=2}^n a_{k-1}, \label{ineq:main2}
\end{eqnarray}
where the last inequality follows from $\| y \| \leq C$ for $y \in {\rm{dom}}\ h$.  Note that by (\ref{ineq:ak}) and (\ref{ineq:ak2}),
\begin{eqnarray*}
\frac{1}{16} \sum_{k=2}^n k^2 \leq  \sum_{k=2}^{n} a_{k-1}^2 , \quad \sum_{k=2}^n a_{k-1} \leq \sum_{k=2}^n k.
\end{eqnarray*}
Using the above two inequalities, (\ref{ineq:main2}) implies that
\begin{eqnarray}
&  & \frac{1}{576}  \left(\min_{2 \leq k \leq n} \left(36L \| x_k - y_k \|^2 + L_k \| y_k - y_{k-1} \|^2 \right)\right) \sum_{k=2}^{n} k^2 \nonumber \\
& \leq & 2 L \|a_{0}[ y_{1} - y_{0}]  + y_{0} - y^\ast \|^2 + 2C^2\left( \underline{m} +  \max_{2 \leq k \leq n} L_k \right)  \sum_{k=2}^n k. \label{ineq:main3}
\end{eqnarray}
When $f$ is convex, $\underline{m} = 0$ and $\max_{2 \leq k \leq n} L_k  = 0$, hence (\ref{ineq:main3}) becomes
\begin{eqnarray}
\frac{1}{576}  \left( \min_{2 \leq k \leq n} \left(36L \| x_k - y_k \|^2 + L_k \| y_k - y_{k-1} \|^2 \right)\right) \sum_{k=2}^{n} k^2 \leq 2 L \|a_{0}[ y_{1} - y_{0}]  + y_{0} - y^\ast \|^2. \label{ineq:main4}
\end{eqnarray}
From (\ref{ineq:main}) and (\ref{ineq:main3}), we deduce that
\begin{eqnarray*}
\min_{1 \leq k \leq n} \| v_k \| = \mathcal{O}(n^{-1/2}),
\end{eqnarray*}
and when $f$ is convex, from (\ref{ineq:main}) and (\ref{ineq:main4}), we deduce that
\begin{eqnarray*}
\min_{1 \leq k \leq n} \| v_k \| = \mathcal{O}(n^{-3/2}).
\end{eqnarray*}
The theorem is hence proved.
\end{proof}

\begin{remark}\label{rem:convexityoptimalfunction}
It is easy to see from (\ref{ineq:convexityfk8}), where we have $\underline{m} = L_k = 0$ when $f$ is convex, that we can obtain the optimal complexity on function values of $\mathcal{O}(\epsilon^{-1/2})$ using Algorithm mFISTA to solve Problem (\ref{CNO}).  That is, it takes a worst case $\mathcal{O}(\epsilon^{-1/2})$ iterations using the algorithm to have $\phi(y_n) - \phi(y^\ast) \leq \epsilon$, when $f$ is convex.
\end{remark}

\subsection{Further Discussions}\label{subsec:discussion}

In this subsection, we analyze the convergence behavior of Algorithm mFISTA further.  

\vspace{10pt}

\noindent We first consider the case when $f$ is convex.  It turns out that we can get from the iteration complexity of Algorithm mFISTA of $O(\epsilon^{-2/3})$ to an asymptotic $o(\epsilon^{-2/3})$ iterations to find an $\epsilon$-approximate solution to Problem (\ref{CNO}) in this case.   

\vspace{10pt}

\noindent When $f$ is convex, it holds that $\underline{m} = L_k = 0$ for all $k$. Hence, (\ref{ineq:convexityfk7}) becomes
\begin{eqnarray}
& & a_{k-1}^2[\phi(y_{k}) - \phi(y^\ast)] - a_{k-2}^2 [\phi(y_{k-1}) - \phi(y^\ast)] + L a_{k-1}^2 \| y_{k} - {x}_{k} \|^2  \nonumber \\
& \leq &   2 L [\|a_{k-2}[ y_{k-1} - y_{k-2}]  + y_{k-2} - y^\ast \|^2 - \| a_{k-1}[ y_{k} - y_{k-1}]  + y_{k-1} - y^\ast \|^2].  \label{ineq:convexityfk9}
\end{eqnarray}
We see from (\ref{ineq:convexityfk9}) that $\{a_{k-1}^2[\phi(y_k) - \phi(y^\ast)] + 2L \|a_{k-1}[y_k - y_{k-1}] + y_{k-1} - y^\ast \|^2 \}$ is a nonincreasing sequence, and furthemore each term in the sequence nonnegative.  Hence, the sequence is convergent.   

\vspace{10pt}

\noindent By summing (\ref{ineq:convexityfk9}) from $k = n+1$ to $2n$, we have
\begin{eqnarray}
& & L \left( \min_{n+1 \leq k \leq 2n}  \| y_k - x_k \|^2  \right) \sum_{k=n+1}^{2n} a_{k-1}^2 \leq L \sum_{k=n+1}^{2n} a_{k-1}^2 \| y_k - x_k \|^2 \nonumber \\
& \leq   & (a_{n-1}^2[\phi(y_{n}) - \phi(y^\ast)] + 2L\| a_{n-1}[y_{n} - y_{n-1}] + y_{n-1} - y^\ast \|^2) - \nonumber \\
& &  (a_{2n - 1}^2 [ \phi(y_{2n}) - \phi(y^\ast)] + 2L\| a_{2n -1}[y_{2n} - y_{2n - 1}] + y_{2n - 1} - y^\ast \|^2). \label{ineq:convexityfk10}
\end{eqnarray}
We are then led to the following theorem:
\begin{theorem}\label{thm:main2}
When $f$ in Problem (\ref{CNO}) is convex, Algorithm mFISTA finds an $\epsilon$-approximate solution to the problem in at most  $o(\epsilon^{-2/3})$ iterations.
\end{theorem}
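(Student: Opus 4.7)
The plan is to upgrade the $O(n^{-3/2})$ rate in Theorem \ref{thm:main} to an $o(n^{-3/2})$ rate on $\min_{1\le k\le 2n}\|v_k\|$ by exploiting the convergence (not just boundedness) of the monotone sequence already identified just before the theorem, and then invert to get $n=o(\epsilon^{-2/3})$.

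First, I would record the two structural inputs that the paper has already set up in the convex case. From the displayed inequality (\ref{ineq:convexityfk9}), the nonnegative sequence
\[
s_n := a_{n-1}^2[\phi(y_n)-\phi(y^\ast)] + 2L\,\bigl\|a_{n-1}[y_n-y_{n-1}]+y_{n-1}-y^\ast\bigr\|^2
\]
is nonincreasing, hence converges to some $s_\infty\ge 0$. In particular, $s_n-s_{2n}\to 0$ as $n\to\infty$. The telescoped inequality (\ref{ineq:convexityfk10}) then reads
\[
L\Bigl(\min_{n+1\le k\le 2n}\|y_k-x_k\|^2\Bigr)\sum_{k=n+1}^{2n}a_{k-1}^2 \;\le\; s_n - s_{2n},
\]
so the right-hand side is $o(1)$.

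Next, I would lower-bound the sum on the left using Proposition \ref{prop:ak}. Since $a_{k-1}\ge k/4$, one has $\sum_{k=n+1}^{2n} a_{k-1}^2 \ge \frac{1}{16}\sum_{k=n+1}^{2n}k^2$, which is of order $n^3$. Dividing,
\[
\min_{n+1\le k\le 2n}\|y_k-x_k\|^2 \;=\; o(n^{-3}), \qquad \text{i.e.}\qquad \min_{n+1\le k\le 2n}\|y_k-x_k\| \;=\; o(n^{-3/2}).
\]
Now I would pass from $\|y_k-x_k\|$ back to $\|v_k\|$. In the convex case, Remark \ref{rem:mFISTA} gives $L_k=0$ for every $k$, so (\ref{eq:vk}) simplifies to $v_k = \nabla f(y_k)-\nabla f(x_k)+4L[x_k-y_k]$, and the Lipschitz property of $\nabla f$ yields $\|v_k\|\le 5L\|x_k-y_k\|$. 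Combining the last two displays,
\[
\min_{1\le k\le 2n}\|v_k\| \;\le\; 5L\min_{n+1\le k\le 2n}\|y_k-x_k\| \;=\; o(n^{-3/2}).
\]

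Finally, I would invert this rate. Given tolerance $\epsilon>0$, the algorithm terminates as soon as some $\|v_k\|\le\epsilon$. From $\min_{1\le k\le 2n}\|v_k\|=o(n^{-3/2})$, the smallest such iteration count $n(\epsilon)$ satisfies $\epsilon \ge \min_{1\le k\le 2n(\epsilon)}\|v_k\|=o(n(\epsilon)^{-3/2})$, which rearranges to $n(\epsilon)=o(\epsilon^{-2/3})$, completing the claim. The only real obstacle is the first step: recognizing that monotone convergence of $\{s_n\}$ (beyond mere boundedness) is what lets us replace the crude telescoping bound $s_n\le s_1$ used in Theorem \ref{thm:main} with the vanishing bound $s_n-s_{2n}=o(1)$; everything else is essentially the same manipulation as before, restricted to the window $[n+1,2n]$.
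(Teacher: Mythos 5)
Your proposal is correct and follows essentially the same route as the paper's own proof: the same monotone-convergence observation on $\{a_{k-1}^2[\phi(y_k)-\phi(y^\ast)]+2L\|a_{k-1}[y_k-y_{k-1}]+y_{k-1}-y^\ast\|^2\}$ drawn from (\ref{ineq:convexityfk9}), the same windowed telescoping bound (\ref{ineq:convexityfk10}) over $[n+1,2n]$ combined with $a_{k-1}\ge k/4$, and the same passage from $\|y_k-x_k\|$ to $\|v_k\|$ via $\|v_k\|\le 5L\|y_k-x_k\|$. The only (harmless) addition is that you spell out the final inversion from $\min_{1\le k\le 2n}\|v_k\|=o(n^{-3/2})$ to $n(\epsilon)=o(\epsilon^{-2/3})$, which the paper leaves implicit.
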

\begin{proof}
When $f$ is convex, $v_k$ in (\ref{eq:vk}) is given by
\begin{eqnarray*}
v_k = \nabla f(y_k) - \nabla f(x_k) + 4L[x_k - y_k].
\end{eqnarray*}
Hence,
\begin{eqnarray}\label{ineq:vk}
\| v_k \| \leq 5L \| y_k - x_k \|.
\end{eqnarray}
On the other hand, from (\ref{ineq:convexityfk10}) and (\ref{ineq:ak}), we have
\begin{eqnarray*}
&    & \frac{1}{16}L \left( \min_{n+1 \leq k \leq 2n}  \| y_k - x_k \|^2  \right) \sum_{k=n+1}^{2n} k^2 \leq  L \left( \min_{n+1 \leq k \leq 2n}  \| y_k - x_k \|^2  \right) \sum_{k=n+1}^{2n} a_{k-1}^2 \\
& \leq   & (a_{n-1}^2[\phi(y_{n}) - \phi(y^\ast)] + 2L\| a_{n-1}[y_{n} - y_{n-1}] + y_{n-1} - y^\ast \|^2) - \\
& &  (a_{2n - 1}^2 [ \phi(y_{2n}) - \phi(y^\ast)] + 2L\| a_{2n -1}[y_{2n} - y_{2n - 1}] + y_{2n - 1} - y^\ast \|^2),
\end{eqnarray*}
where the right hand side in the above inequality tends to zero as $n$ tends to infinity since $\{a_{k-1}^2[\phi(y_k) - \phi(y^\ast)] + 2L \|a_{k-1}[y_k - y_{k-1}] + y_{k-1} - y^\ast \|^2 \}$ is a convergent sequence.  Hence, we have
\begin{eqnarray*}
\min_{n+1 \leq k \leq 2n}  \| y_k - x_k \|^2  = o(n^{-3}).
\end{eqnarray*}
The above equality together with (\ref{ineq:vk}) imply that
\begin{eqnarray*}
\min_{n + 1 \leq k \leq 2n} \| v_k \| = o(n^{-3/2}).
\end{eqnarray*}
The theorem is hence proved.
\end{proof}

\vspace{10pt}

\noindent The above theorem takes existing non-asymptotic results in the literature, as appeared in \cite{Ghadimi,Liang3} (see also \cite{Shi}), and in Theorem \ref{thm:main}, where we have an iteration complexity to find an $\epsilon$-approximate solution to Problem (\ref{CNO}) of $\mathcal{O}(\epsilon^{-2/3})$, a step forward by stating an asymptotic convergence rate result - at most $o(\epsilon^{-2/3})$ iterations to find an $\epsilon$-approximate solution to Problem (\ref{CNO}).  

\vspace{10pt}

\noindent In the general case when $f$ can be nonconvex, we have the following result:
\begin{theorem}\label{thm:main3}
Suppose $\{y_k\}$ is convergent.  Then Algorithm mFISTA finds an $\epsilon$-approximate solution to Problem (\ref{CNO}) in at most $o(\epsilon^{-2})$ iterations.
\end{theorem}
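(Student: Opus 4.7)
The plan is to parallel the proof of Theorem \ref{thm:main2}: sum the key inequality (\ref{ineq:convexityfk7}) from $k=n+1$ to $k=2n$ (instead of from $k=2$ to $k=n$ as in Theorem \ref{thm:main}), and use the assumed convergence of $\{y_k\}$ to upgrade the resulting right-hand-side bound from $\mathcal{O}(n^2)$ to $o(n^2)$. First I would extract the consequences of convergence: writing $y_k\to\bar y\in\mathrm{dom}\,h$, one has $\|y_k-y_{k-1}\|\to 0$; since $\frac{a_{k-1}-1}{a_k}\to 1$ by Proposition \ref{prop:ak}, $P_\Omega$ is continuous, and $\bar y\in\Omega$, the update rule (\ref{eq:hatxk}) forces $x_k\to\bar y$, whence $\|y_k-x_k\|\to 0$ and $\|v_k\|\to 0$ by (\ref{eq:vk}).

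Setting $E_k := a_{k-1}^2[\phi(y_k)-\phi(y^\ast)] + 2L\|a_{k-1}(y_k-y_{k-1})+y_{k-1}-y^\ast\|^2$ and summing (\ref{ineq:convexityfk7}) from $n+1$ to $2n$, after rewriting the $L_k$ terms on the right as a telescope in $\|y_{k-1}-y^\ast\|^2-\|y^\ast-y_k\|^2$, I would obtain
\begin{align*}
& E_{2n}-E_n + L\sum_{k=n+1}^{2n} a_{k-1}^2\|y_k-x_k\|^2 + \tfrac{1}{2}\sum_{k=n+1}^{2n} L_k a_{k-1}^2\|y_k-y_{k-1}\|^2 \\
& \qquad \le \tfrac{\underline{m}}{2}\sum_{k=n+1}^{2n} a_{k-1}\|y^\ast-y_k\|^2 + \tfrac{1}{2}\sum_{k=n+1}^{2n} L_k a_{k-1}\bigl(\|y_{k-1}-y^\ast\|^2-\|y^\ast-y_k\|^2\bigr).
\end{align*}
The last sum on the right is bounded in absolute value by $2CL\sum a_{k-1}\|y_k-y_{k-1}\| = o(1)\cdot\mathcal{O}(n^2) = o(n^2)$, because $\|y_k-y_{k-1}\|\to 0$ while $\sum_{n+1}^{2n} a_{k-1} = \mathcal{O}(n^2)$ by Proposition \ref{prop:ak}. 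The preceding $\underline{m}$-sum equals $\tfrac{\underline{m}\|y^\ast-\bar y\|^2}{2}\sum a_{k-1}+o(n^2)$, whose leading piece is merely $\mathcal{O}(n^2)$; to absorb it I would exploit that $\phi(y_k)\to\phi(\bar y)$, so $E_{2n}-E_n$ has leading behavior $(a_{2n-1}^2-a_{n-1}^2)[\phi(\bar y)-\phi(y^\ast)]$, and that $\bar y$ must be a stationary point of $\phi$ (by passing to the limit in the optimality condition defining $y_k$ in Step 2 of Algorithm mFISTA); combined with convexity of $h$ and (\ref{ineq:lowerlipschitzconsequence2}) this gives $\phi(\bar y)-\phi(y^\ast)\le \tfrac{\underline{m}}{2}\|y^\ast-\bar y\|^2$, so the two leading $\mathcal{O}(n^2)$ contributions cancel.

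After the cancellation the left-hand sum is $o(n^2)$, and the bound $\|v_k\|^2\le 2L(36L\|y_k-x_k\|^2+L_k\|y_k-y_{k-1}\|^2)$ extracted from the proof of Theorem \ref{thm:main} yields $\sum_{k=n+1}^{2n} a_{k-1}^2\|v_k\|^2=o(n^2)$. Dividing by $\sum_{k=n+1}^{2n} a_{k-1}^2 \ge \tfrac{1}{16}\sum_{k=n+1}^{2n} k^2 = \Theta(n^3)$ (from Proposition \ref{prop:ak}) gives $\min_{n+1\le k\le 2n}\|v_k\|^2 = o(n^{-1})$, i.e.\ $\min_{1\le k\le 2n}\|v_k\| = o(n^{-1/2})$, which is exactly the claimed $o(\epsilon^{-2})$ iteration bound. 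The main obstacle is the cancellation step: the leading $\mathcal{O}(n^2)$ pieces cancel thanks to stationarity of $\bar y$ and (\ref{ineq:lowerlipschitzconsequence2}), but showing that what remains is genuinely $o(n^2)$ rather than merely $\mathcal{O}(n^2)$ requires quantitative rates for $\phi(y_k)\to\phi(\bar y)$ and $\|y^\ast-y_k\|^2\to\|y^\ast-\bar y\|^2$ that go beyond the raw convergence $y_k\to\bar y$, and thus forces a finer analysis of the sequence $\{E_k\}$ under the convergence hypothesis.
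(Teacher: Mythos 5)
There is a genuine gap, and you have in fact flagged it yourself in your closing sentence: the ``cancellation step'' does not work, and without it your bound is only $\mathcal{O}(n^2)$, not $o(n^2)$. Concretely, after summing (\ref{ineq:convexityfk7}) from $k=n+1$ to $2n$ and moving the telescoped function-value terms to the right, the two leading contributions are $-(a_{2n-1}^2-a_{n-1}^2)\,\Delta_\infty$ and $+\tfrac{\underline{m}}{2}\|y^\ast-\bar y\|^2(a_{2n-1}^2-a_{n-1}^2)$, where $\Delta_\infty=\phi(\bar y)-\phi(y^\ast)\ge 0$ and $a_{2n-1}^2-a_{n-1}^2=\sum_{k=n+1}^{2n}a_{k-1}=\Theta(n^2)$ by (\ref{eq:ak}). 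For these to combine into something $\le o(n^2)$ you would need $\Delta_\infty\ge\tfrac{\underline{m}}{2}\|y^\ast-\bar y\|^2$. But the inequality you derive from stationarity of $\bar y$, convexity of $h$ and (\ref{ineq:lowerlipschitzconsequence2}) is $\Delta_\infty\le\tfrac{\underline{m}}{2}\|y^\ast-\bar y\|^2$ --- the opposite direction. It leaves a possibly strictly positive $\Theta(n^2)$ remainder (think of $\bar y$ a non-global stationary point far from $y^\ast$ with $\phi(\bar y)$ close to $\phi(y^\ast)$), and no amount of quantitative rates for $\phi(y_k)\to\phi(\bar y)$ will repair this, because the obstruction is a constant-order mismatch, not a slowly vanishing error.

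The paper sidesteps this entirely with one observation you missed: Lemma \ref{lem:keyinequality} is proved by taking $y=y^\ast$ in (\ref{ineq:convexityfk}), and that step only uses $y^\ast\in{\rm{dom}}\ h$; hence (\ref{ineq:convexityfk7}) holds verbatim with $y^\ast$ replaced by the limit $y^{\ast\ast}$ of $\{y_k\}$ (which lies in ${\rm{dom}}\ h$ since that set is closed). With the anchor re-centered at $y^{\ast\ast}$, every term on the right-hand side of the summed inequality --- $|\phi(y_n)-\phi(y^{\ast\ast})|$, $|\phi(y_{2n})-\phi(y^{\ast\ast})|$, $\|y_k-y_{k-1}\|^2$, and $\max_{n\le k\le 2n}\|y_k-y^{\ast\ast}\|^2$ --- carries a factor that tends to zero by the convergence hypothesis, so the whole right-hand side is $Kn^2\cdot o(1)=o(n^2)$ with no cancellation needed; dividing by $\sum_{k=n+1}^{2n}a_{k-1}^2=\Theta(n^3)$ then gives $\min_{1\le k\le 2n}\|v_k\|=o(n^{-1/2})$ exactly as you intended. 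Your preliminary observations ($x_k\to\bar y$, $\|v_k\|\to 0$) and your use of the bound on $\|v_k\|$ from Theorem \ref{thm:main} are correct but do not by themselves yield a rate; the re-anchoring at $y^{\ast\ast}$ is the missing idea.
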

\begin{proof}
Suppose $y_k \rightarrow y^{\ast\ast}$ as $k \rightarrow \infty$.  It is easy to check that Lemma \ref{lem:keyinequality}, in particular (\ref{ineq:convexityfk7}), still holds with $y^\ast$ replaced by $y^{\ast\ast}$.  Hence, we have for $k \geq 2$,
\begin{eqnarray*}
& & a_{k-1}^2[\phi(y_{k}) - \phi(y^{\ast\ast})] - a_{k-2}^2 [\phi(y_{k-1}) - \phi(y^{\ast\ast})] + \frac{L_k}{2}a_{k-1}^2\|y_{k} - y_{k-1} \|^2 +  L a_{k-1}^2 \| y_{k} - {x}_{k} \|^2 \\
& \leq &   2 L [\|a_{k-2}[ y_{k-1} - y_{k-2}]  + y_{k-2} - y^{\ast\ast} \|^2 - \| a_{k-1}[ y_{k} - y_{k-1}]  + y_{k-1} - y^{\ast\ast} \|^2] +\\
&   & \frac{\underline{m} - L_k}{2} a_{k-1}  \| y^{\ast\ast} - y_{k} \|^2 + \frac{L_k}{2} a_{k-1} \| y_{k-1} - y^{\ast\ast} \|^2. 
\end{eqnarray*}
Summing the above inequality from $k = n+1$ to $2n$ leads to
\begin{eqnarray}
&   & \frac{1}{2} \sum_{k=n+1}^{2n} L_k a_{k-1}^2 \| y_k - y_{k-1} \|^2 + L \sum_{k=n+1}^{2n} a_{k-1}^2 \| y_k - x_k \|^2 \nonumber \\
& \leq & a_{n-1}^2 [\phi(y_{n}) - \phi(y^{\ast\ast})] - a_{2n-1}^2[\phi(y_{2n}) - \phi(y^{\ast\ast})] \nonumber \\
&   & 2L[\|a_{n-1}^2[y_n - y_{n-1}] + y_{n-1} - y^{\ast\ast} \|^2 - \| a_{2n-1}^2[y_{2n} - y_{2n-1}] + y_{2n-1} - y^{\ast\ast}\|^2] + \nonumber \\
&  & \frac{1}{2} \sum_{k=n+1}^{2n} (\underline{m} - L_k) a_{k-1} \| y^{\ast\ast} - y_k \|^2 + \frac{1}{2} \sum_{k=n+1}^{2n} L_k a_{k-1} \| y_{k-1} - y^{\ast\ast}\|^2. \label{ineq:convexityfk11}
\end{eqnarray}
From (\ref{ineq:convexityfk11}), using (\ref{ineq:ak}) and (\ref{ineq:ak2}), and following similar arguments which appear earlier in the paper, it is easy to check that we have
\begin{eqnarray*}
&  & \left( \min_{n+1 \leq k \leq 2n}  \left( 36L \| x_k - y_k \|^2 + L_k \| y_k - y_{k-1} \|^2 \right) \right) \sum_{k=n+1}^{2n} k^2 \\
& \leq & K n^2 [ |\phi(y_n) - \phi(y^{\ast\ast})| + |\phi(y_{2n}) - \phi(y^{\ast\ast})| + \| y_n - y_{n-1} \|^2 + \|y_{2n} - y_{2n-1} \|^2 + \\
&   & \max_{n \leq k \leq 2n} \| y_{k} - y^{\ast\ast} \|^2],
\end{eqnarray*}
where $K$ is some positive constant independent of $n$.  Note that 
\begin{eqnarray*}
 |\phi(y_n) - \phi(y^{\ast\ast})| + |\phi(y_{2n}) - \phi(y^{\ast\ast})| + \| y_n - y_{n-1} \|^2 + \|y_{2n} - y_{2n-1} \|^2 +  \max_{n \leq k \leq 2n} \| y_{k} - y^{\ast\ast} \|^2
 \end{eqnarray*}
 tends to zero as $n$ tends to infinity, since $y_k \rightarrow y^{\ast\ast}$ as $k \rightarrow \infty$.   On the other hand, we have
 \begin{eqnarray*}
\min_{1 \leq k \leq 2n} \| v_k \| \leq 2 \sqrt{L}  \left( \min_{n+1 \leq k \leq 2n} \left( 36L \| x_k - y_k \|^2 + L_k \| y_k - y_{k-1} \|^2  \right) \right)^{1/2}
\end{eqnarray*} 
which can be derived in a similar way as  (\ref{ineq:main}).  Putting everything together, we obtain
\begin{eqnarray*}
\min_{1 \leq k \leq 2n} \| v_k \|  = o(n^{-1/2}),
\end{eqnarray*}
and the theorem is proved.
\end{proof}

\vspace{10pt}

\noindent The result in the above theorem contributes further to the literature complementing the paper \cite{Gratton} on first order algorithms to solve nonconvex optimization problems.

\section{Conclusion}\label{sec:conclusion}

In this paper, we propose a first order algorithm, Algorithm mFISTA, to solve an optimization problem whose objective function is the sum of a possibly nonconvex function, with Lipschitz continuous gradient, and a convex function which can be nonsmooth.  A feature of the algorithm is its ability to ``detect'' when the objective function is convex to obtain an improved iteration complexity.  We analyze the algorithm to provide iteration complexity results for the algorithm in the nonconvex and convex case.  We further provide asymptotic convergence rate results for the algorithm also in the nonconvex and convex case that take the iteration complexity results for these cases a step forward.

\vspace{10pt}

\noindent {\bf{\underline{Data Availability Statement}}}

\vspace{10pt}

\noindent No datasets were generated or analyzed during the current study.

\bibliographystyle{plain}
\bibliography{Reference_Sim}


\end{document}